\newtheorem{thm}{Theorem}[section]
\newtheorem*{thm*}{Theorem}
\newtheorem*{cor*}{Corollary}
\newtheorem*{prop*}{Proposition}
\newtheorem{cor}[thm]{Corollary}
\newtheorem{lem}[thm]{Lemma}
\newtheorem{rmk}[thm]{Remark}
\theoremstyle{definition}
\newtheorem{defn}[thm]{Definition}
\newtheorem{exmp}[thm]{Example}
\newtheorem*{notn*}{Notation}
\theoremstyle{remark}
\newtheorem{rem}[thm]{Remark}
\newtheorem*{idea*}{Idea}
\let\c@equation\c@thm
\numberwithin{thm}{section}
\numberwithin{equation}{section}
\title[Root Multiplicities of Rank 3 Kac-moody Algebras]{A Combinatorial Approach to Root Multiplicities of a Special Type Rank 3 Kac-moody Algebras}
\author{Bowen Chen, Hanyi Luo and  Hao Sun}
\begin{document}
\pagenumbering{arabic}
\maketitle
\begin{abstract}
In this paper, we calculate the dimension of root spaces $\mathfrak{g}_{\lambda}$ of a special type rank $3$ Kac-Moody algebras $\mathfrak{g}$. We first introduce a special type of elements in $\mathfrak{g}$, which we call elements in standard form. Then, we prove that any root space is spanned by these elements. By calculating the number of linearly independent elements in standard form, we obtain a formula for the dimension of root spaces $\mathfrak{g}_{\lambda}$, which depends on the root $\lambda$.
\end{abstract}

\flushbottom

%\tableofcontents

%\newpage

\renewcommand{\thefootnote}{\fnsymbol{footnote}}
\footnotetext[1]{MSC2010 Class: 17B67, 17B22}
\footnotetext[2]{Key words: Kac-Moody algebra, hyperbolic Kac-Moody algebra, root multiplicity}

\section{Introduction}\label{sec:1}
Kac-Moody algebras are generalizations of finite-dimensional semisimple Lie algebras, of which the structures can be defined from generalized Cartan matrices. The dimensions of root spaces (also called \emph{root multiplicities}) are important data in understanding the structure of Kac-Moody algebras. Generally speaking, Kac-Moody algebras are divided into three groups: finite case, affine case and indefinite case (see \cite{Kac}). Finite and affine Kac-Moody algebras were studied in many different ways in the last decades, while the indefinite type is still mysterious as its name implies. As a special case of indefinite Kac-Moody algebras, hyperbolic Kac-Moody algebras have some interesting properties, and people have made a full classification and studied the root multiplicities in the last several decades \cite{CarChu,Fren,Kang}. In this paper, we study the dimension of root spaces of a special type indefinite Kac-Moody algebras with rank $3$.

Let $A$ be a generalized Cartan matrix, and denote by $\mathfrak{g}(A)$ the corresponding Kac-Moody algebra. Let $A$ be a matrix in the following type
\begin{align*}
\begin{pmatrix}
2 &-a_1 & 0 \\
-a_1 &2 & -a_2\\
0 & -a_2 & 2\\
\end{pmatrix},
\end{align*}
where $a_1,a_2$ are positive integers and at least one of them is larger than one. We will calculate the dimension of root spaces of the corresponding Kac-Moody algebra $\mathfrak{g}:=\mathfrak{g}(A)$ in this paper. The Kac-Moody algebra $\mathfrak{g}$ has three simple roots $\alpha_1,\alpha_2,\alpha_3$. Denote by $e_i$ the element spanning the root space $\mathfrak{g}_{\alpha_i}$. From this special matrix $A$, we also know that $[e_1,e_3]=0$, which is also an important property.

In \S \ref{sec:2}, we first consider a special type of elements in $\mathfrak{g}$, which are called elements in standard form. These are elements that can be written in the following way
\begin{align*}
    [e_{a_{n}},[e_{a_{n-1}},[\dots,[e_{a_{2}},e_{a_{1}}]\dots]]].
\end{align*}
Let $\lambda=n_1 \alpha_1 +n_2 \alpha_2 + n_3 \alpha_3$ be a positive root, and $n=n_1+n_2+n_3$. We prove that any root space $\mathfrak{g}_{\lambda}$ can be spanned by elements in standard form (see Theorem \ref{thm:2.3} and Corollary \ref{cor:2.4}). Note that given an $n$-tuple $(a_n,a_{n-1},\dots,a_1)$, we can equate it with an element in standard form $[e_{a_{n}},[e_{a_{n-1}},[\dots,[e_{a_{2}},e_{a_{1}}]\dots]]]$. Therefore, we calculate the dimension of root spaces $\mathfrak{g}_{\lambda}$ by calculating the corresponding number of $n$-tuples. With respect to the basic property of Lie algebras (see \cite{Hum})
\begin{align*}
    [\mathfrak{g}_{\alpha} ,\mathfrak{g}_\beta] \subseteq \mathfrak{g}_{\alpha+\beta},
\end{align*}
where $\alpha,\beta$ are roots, we know that $\mathfrak{g}_{\lambda}$ is spanned by elements in standard form $$[e_{a_{n}},[e_{a_{n-1}},[\dots,[e_{a_{2}},e_{a_{1}}]\dots]]]$$ such that the number of $e_i$ is $n_i$. Although an $n$-tuple corresponds to an element in standard form, this element can be trivial, and several $n$-tuples may correspond to linearly dependent elements. Thus, we have to eliminate all of these bad cases. 

In \S \ref{sec:3}, we introduce a combinatorial problem, which will be used in calculating the dimension. In \S \ref{sec:4}, we make a careful discussion of the $n$-tuples together with the corresponding elements in $\mathfrak{g}$. Here are several important properties.
\begin{enumerate}
    \item If $e_1$ and $e_3$ are adjacent in a standard form $g$, then we will get a new element $g'$ in standard form by switching the position of $e_1$ and $e_3$. We find that $g'=-g$ (see Lemma \ref{lem:4.3}).
    \item If $e_i$ and $e_2$ are adjacent in a standard form $g$, where $i=1,3$, switching the position will give a new element $g'$, which is linearly independent to $g$ (see Lemma \ref{lem:4.5}).
\end{enumerate}
The properties above reduce the problem of calculating the dimension of $\mathfrak{g}_{\lambda}$ to a combinatorial problem such that we have $n_1$ red balls, $n_3$ blue balls, and we want to put them into $n_2$ boxes (under some extra conditions in Lemma \ref{lem:4.5}). With a careful discussion of the cases in the first box (see Lemma \ref{lem:4.8}), we have the following upper bound of $\dim \mathfrak{g}_{\lambda}$
\begin{align*}
    A:=\sum_{0\leq i \leq a_1, 0\leq j \leq a_2 \atop (i,j)\neq (0,0)}{n_1+n_2-i-2 \choose n_2-2}{n_2+n_3-j-2 \choose n_2-2}.
\end{align*}
However, there are some $n$-tuples left corresponding to trivial elements in $\mathfrak{g}_\lambda$. With a careful discussion in \S \ref{sec:4.3.1}, we have the following number of ``trivial" $n$-tuples
\begin{align*}
    C=C_1+C_2:={n_1+n_2+n_3-a_1-2 \choose n_2-a_1-1}+{n_1+n_2+n_3-a_2-2 \choose n_2-a_2-1}.
\end{align*}
After deleting all ``trivial" elements, there are still some $n$-tuples linearly dependent to each other. We discuss this case in \S \ref{sec:4.3.2}, and the number is given as follows
\begin{align*}
    B:={n_2+n_3-4 \choose n_2-2} {n_1+n_2-2 \choose n_2-2}+{n_1+n_2-4 \choose n_2-2} {n_2+n_3-2 \choose n_2-1}.
\end{align*}

With respect to the discussion above, we have our main theorem.
\begin{thm}[Theorem \ref{thm:4.11}]
Suppose that $a_1 \leq a_2$ and $n_1,n_2,n_3 \geq 2$. Let $\lambda = \sum_{i=1}^{3}n_i \alpha_i$. Denote by $\mathfrak{g}_{\lambda}$ the corresponding root space. We have
\begin{align*}
\dim \mathfrak{g}_{\lambda} = \left\{\begin{matrix}
A-B, &n_2 < \min\left \{1+a_1,1+a_2  \right \} \\ 
A-B-C_1, & 1+a_1 \leq n_2 < 1+a_2\\ 
A-B-C_1-C_2, & n_2 \geq \max\left \{1+a_1,1+a_2  \right \}
\end{matrix}\right..
\end{align*}
\end{thm}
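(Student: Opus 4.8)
The plan is to assemble the main theorem from the structural results of \S\ref{sec:4}, treating the dimension as a count of linearly independent standard-form elements and then correcting this count for triviality and for linear dependence. First I would invoke Corollary~\ref{cor:2.4} to reduce the problem entirely to combinatorics: since $\mathfrak{g}_\lambda$ is spanned by standard-form elements in which $e_i$ occurs exactly $n_i$ times, $\dim\mathfrak{g}_\lambda$ equals the maximal number of linearly independent such elements, equivalently the number of admissible $n$-tuples minus the dimension of the space of linear relations among them. The entire argument therefore amounts to pinning down this number exactly.

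Second, I would set up the ball-in-box normal form. Using Lemma~\ref{lem:4.3} (adjacent $e_1,e_3$ anticommute) and Lemma~\ref{lem:4.5} (adjacent $e_i,e_2$ for $i\in\{1,3\}$ give linearly independent rearrangements), every standard form can be brought to a canonical representative recording, for each of the $n_2$ boxes determined by the copies of $e_2$, how many $e_1$'s and $e_3$'s it contains. The first-box analysis of Lemma~\ref{lem:4.8} then yields the count $A$ as an upper bound for $\dim\mathfrak{g}_\lambda$: each surviving configuration is indexed by the occupation pair $(i,j)$ of the first box subject to $0\le i\le a_1$, $0\le j\le a_2$, $(i,j)\neq(0,0)$, while the two binomial factors $\binom{n_1+n_2-i-2}{n_2-2}$ and $\binom{n_2+n_3-j-2}{n_2-2}$ count the distributions of the remaining $n_1-i$ red and $n_3-j$ blue balls among the other $n_2-1$ boxes.

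Third, I would subtract the two kinds of overcounting identified in \S\ref{sec:4.3.1} and \S\ref{sec:4.3.2}. The configurations counted by $A$ include tuples whose associated bracket is identically zero; by the analysis of \S\ref{sec:4.3.1} their number is exactly $C_1+C_2$, with $C_1$ arising from the $a_1$-side and $C_2$ from the $a_2$-side. Crucially, $C_1=\binom{n_1+n_2+n_3-a_1-2}{n_2-a_1-1}$ vanishes precisely when $n_2<1+a_1$ and $C_2$ vanishes precisely when $n_2<1+a_2$, under the convention that a binomial coefficient with negative lower entry is zero; since $a_1\le a_2$ these two thresholds are ordered, which is exactly what produces the three regimes in the statement. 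Among the remaining nontrivial configurations there persist linear dependences, and \S\ref{sec:4.3.2} shows that the number of redundant tuples to be removed is $B$; here the asymmetry between the two summands of $B$ (the lower entries $n_2-2$ versus $n_2-1$) is the place where the hypothesis $a_1\le a_2$ is genuinely used. Combining the three inputs gives $\dim\mathfrak{g}_\lambda = A-B-C_1-C_2$, which unfolds into the three displayed cases once the vanishing of $C_1$ and $C_2$ is read off from the ranges of $n_2$.

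The main obstacle, and the step requiring the most care, is establishing that this count is sharp, i.e. that after deleting the trivial tuples and the redundant tuples the surviving standard-form elements are genuinely linearly independent, so that $A-B-C_1-C_2$ is a lower bound and not merely an upper bound. This demands checking that the vanishings catalogued by $C$ and the dependences catalogued by $B$ account for all coincidences with no double counting: in particular that a tuple already removed as trivial is not counted a second time among the $B$ dependences, and that the dependence relations subtracted are themselves independent. I would dispatch this by verifying that the trivial configurations of \S\ref{sec:4.3.1} and the paired configurations of \S\ref{sec:4.3.2} occupy disjoint families of box-occupation patterns, so that the corrections $A$, $B$, and $C$ act on mutually consistent index sets and the inclusion--exclusion is exact.
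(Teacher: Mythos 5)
Your proposal follows essentially the same route as the paper: the paper's own proof of Theorem \ref{thm:4.11} is simply the assembly of the upper bound $A$ from \S\ref{sec:4.2}, the subtraction of the linearly dependent tuples $B$ from \S\ref{sec:4.3.2}, and the trivial tuples $C_1,C_2$ from \S\ref{sec:4.3.1}, with the three regimes governed by when the binomial coefficients $C_1$ and $C_2$ vanish. If anything, you are more careful than the paper, since you explicitly flag the need to verify that the surviving standard-form elements are genuinely linearly independent (so that $A-B-C_1-C_2$ is also a lower bound) and that the corrections $B$ and $C$ do not double-count --- a point the paper's proof passes over in silence.
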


Note that in the theorem, we assume that $n_i \geq 2$ for $i=1,2,3$. In Remark \ref{rmk:4.11.1}, we briefly discuss the general cases that $n_i \geq 0$.

As an application of the main theorem, we calculate the dimension of root spaces of a special $3 \times 3$ hyperbolic Kac-Moody algebra (see Corollary \ref{cor:4.13}).

\section{Standard Form}\label{sec:2}
We first give the definition of Kac-Moody algebras. We refer the readers to \cite{Kac} for more details. Let $A$ be a $r \times r$ generalized Cartan matrix (GCM) of rank $l$. A \emph{realization} of $A$ is a triple $(\mathfrak{h},\Pi,\Pi^{\vee})$, where $\mathfrak{h}$ is a vector space, $\Pi=\{\alpha_1,\dots,\alpha_r\} \subseteq \mathfrak{h}^*$ and $\Pi^{\vee}=\{\alpha_1^{\vee} ,\dots, \alpha_r^{\vee} \} \subseteq \mathfrak{h}$, such that
\begin{enumerate}
\item both sets $\Pi$ and $\Pi^{\vee}$ are linearly independent.
\item $(\alpha_i^{\vee},\alpha_j)=a_{ij}$, $1 \leq i,j \leq r$,
\item $r-l=\dim \mathfrak{h}-r$.
\end{enumerate}
We fix a generalized Cartan matrix $A$. Let $(\mathfrak{h},\Pi,\Pi^{\vee})$ be a realization of it. The \emph{Kac-Moody algebra} $\mathfrak{g}:=\mathfrak{g}(A)$ is a Lie algebra (with Lie bracket $[,]$) generated by $\mathfrak{h}$ and $e_i,f_i$, where $1 \leq i \leq r$, such that
\begin{enumerate}
\item $[e_i,f_j]=\delta_{ij} \alpha_i^{\vee}$, $1 \leq i,j \leq r$.
\item $[h,h']=0$, where $h,h' \in \mathfrak{h}$.
\item $[h,e_i]=(\alpha_i,h)e_i$.
\item $[h,f_i]=-(\alpha_i,h)f_i$.
\item If $i \neq j$, we have ${\rm ad}(e_i)^{1- a_{ij}}(e_j)=0$ and ${\rm ad}(f_i)^{1- a_{ij}}(f_j)=0$, where ${\rm ad}(x)(y):=[x,y]$ is the adjoint representation of $\mathfrak{g}$ with respect to the Lie algebra structure.
\end{enumerate}
The elements $\alpha_i$ are called \emph{simple roots}. Let $\lambda=\sum\limits_{i=1}^r n_i \alpha_i$ be a root of $\mathfrak{g}$, and denote by $\mathfrak{g}_{\lambda}$ the root space of $\lambda$. In this section, we study some special elements in $\mathfrak{g}$, which we call \emph{elements in standard form}. We prove that the root space $\mathfrak{g}_{\lambda}$ has a basis, of which the elements are in standard form. For simplicity, we only work on positive roots and the corresponding root space $\mathfrak{g}_{\lambda}$ is generated by $\{e_i\}_{1 \leq i \leq r}$.

%{\color{red}(Add the reason later, why only taking $e_i$.)}
%{\color{blue}(Since $[h,h'] = 0$ ($h,h' \in \mathfrak{h}$) and the lie bracket between any $h$ and other elements will be linearly dependent to either $e_i$ or $f_i$, we will no longer consider $h_i$ because it won't contribute to the dimension of $\mathfrak{g}$. In addition, lie bracket composed by $f_i$ will be symmetric to those of $e_i$.}. {\color{red} lie bracket between $e_i$ and $f_i$ will be just a constant, which also does not contribute to $\dim\mathfrak{g}$. The remaining two combinations of elements in lie bracket, which are two $e_i$ or two $f_i$ in lie bracket, show similar results whose only difference is the sign. Among them, we choose $e_i$ as the elements to be researched.}

\begin{defn}\label{Def:2.1}
An element $g \in \mathfrak{g}$ is in \emph{standard form of length $n$} if it can be written in the following way
\begin{align*}
    [e_{a_{n}},[e_{a_{n-1}},[\dots[e_{a_{2}},e_{a_{1}}]\dots]]]
    ,
\end{align*}
where $1\leq a_i\leq r$ for any integer $r>1$.
\end{defn}

Following the definitions above, $[e_1,[e_2,e_3]]$ is a standard form of length $3$. $[[e_1,e_2],[e_3,e_4]]$ is an element of length $4$ but not in standard form.

\begin{lem}\label{lem:2.2}
Given two elements $X, Y \in \mathfrak{g}$ in standard form of length $i$ and $j$ respectively, $[X,Y]$ can be written as a linear sum of elements in standard form of length $i+j$.
\end{lem}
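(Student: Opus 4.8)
The plan is to induct on the length $i$ of the first standard form $X$, proving the statement uniformly over all standard forms $Y$ of arbitrary length $j$. The key observation is that a standard form $[e_{a_n},[e_{a_{n-1}},[\dots,[e_{a_2},e_{a_1}]\dots]]]$ is exactly what one obtains by prepending generators $e_{a_k}$ one at a time to an innermost bracket; hence prepending a single $e_a$ to a standard form of length $m$ produces a standard form of length $m+1$. This handles the base case immediately: when $i=1$ we have $X=e_a$, and $[X,Y]=[e_a,Y]$ is already a standard form of length $1+j$.

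For the inductive step, suppose the claim holds for all first arguments of length strictly less than $i$ (and all second arguments), and let $X$ have length $i\geq 2$. Writing $X=[e_a,X'']$ where $X''$ is a standard form of length $i-1$, I would apply the Jacobi identity in the form
\begin{align*}
[X,Y]=[[e_a,X''],Y]=[e_a,[X'',Y]]-[X'',[e_a,Y]].
\end{align*}
This is the crucial move: it trades the bracket $[X,Y]$ of a length-$i$ element with $Y$ for two brackets in which the first argument $X''$ has the shorter length $i-1$.

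I would then treat the two terms separately. For the first term, the inductive hypothesis applies to $X''$ (length $i-1$) and $Y$, so $[X'',Y]$ is a linear combination of standard forms of length $(i-1)+j$; prepending $e_a$ to each summand yields standard forms of length $i+j$, so $[e_a,[X'',Y]]$ has the desired shape. For the second term, I first note that $[e_a,Y]$ is a standard form of length $j+1$, and then apply the inductive hypothesis again, now to $X''$ (length $i-1$) and the longer second argument $[e_a,Y]$ (length $j+1$); this expresses $[X'',[e_a,Y]]$ as a linear combination of standard forms of length $(i-1)+(j+1)=i+j$. Summing, $[X,Y]$ is a linear combination of standard forms of length $i+j$, completing the induction.

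The only real subtlety — and the reason the induction must be stated uniformly in $j$ — is that the second term forces an appeal to the hypothesis with a second argument $[e_a,Y]$ that is \emph{longer} than the original $Y$; inducting on the length of $X$ alone with $Y$ held fixed would not close the argument. No other obstacle arises, since the prepending observation makes both terms manifestly linear combinations of standard forms of the correct length $i+j$.
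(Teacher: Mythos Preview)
Your proof is correct and follows essentially the same approach as the paper: induct on the length of the first argument, peel off the outermost generator $e_a$ from $X=[e_a,X'']$, apply the Jacobi identity to obtain $[e_a,[X'',Y]]-[X'',[e_a,Y]]$, and invoke the inductive hypothesis on each term. The only differences are cosmetic: you take $i=1$ as the base case (where $[e_a,Y]$ is already standard) while the paper starts at $i=2$, and you are more explicit than the paper about why the induction must be uniform in $j$ to accommodate the longer second argument $[e_a,Y]$ in the second term.
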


\begin{proof}
Denote by $k_i$ an element of length $i$ in standard form. Without loss of generality, let $X=k_i$ and $Y=k_j$.

We first start from the base case $[k_{2},k_{n}]$. Let $k_2$ be $[e_{a_{1}},e_{a_{2}}]$, then we have
\begin{align*}
[k_2,k_n]=&-[k_n,[e_{a_{1}},e_{a_{2}}]].\\
\end{align*}
Based on Jacobi Identity, one can write
\begin{align*}
=&[e_{a_{1}},[e_{a_{2}},k_{n}]]+[e_{a_{2}},[k_{n},e_{a_{1}}]]\\
=&[e_{a_{1}},[e_{a_{2}},k_{n}]]-[e_{a_{2}},[e_{a_1},k_n]].
\end{align*}
Since $k_n$ is in standard form, the two elements above $[e_{a_{1}},[e_{a_{2}},k_{n}]]$ and $[e_{a_{2}},[e_{a_1},k_n]]$ are also in standard form of length $n+2$.

Then, we keep computing $[k_3,k_n]$ to find the pattern. Here, we let $k_3=[e_{a_{1}},[e_{a_{2}},e_{a_{3}}]]$.
\begin{align*}
[k_3,k_n]
=&-[k_n,[e_{a_{1}},[e_{a_{2}},e_{a_{3}}]]]\\
=&[e_{a_{1}},[[e_{a_{2}},e_{a_{3}}],k_n]]+[[e_{a_{2}},e_{a_{3}}],[k_n,e_{a_{1}}]]\\
=&[e_{a_{1}},([e_{a_{2}},[e_{a_{3}},k_n]-[e_{a_{3}},[e_{a_{2}},k_n])]+[e_{a_{2}},[e_{a_{3}},[e_{a_{1}},k_n]]]-[e_{a_{3}},[e_{a_{2}},[e_{a_{1}},k_n]]]\\
=&[e_{a_{1}},[e_{a_{2}},[e_{a_{3}},k_n]]-[e_{a_{1}},[e_{a_{3}},[e_{a_{2}},k_n]]]+[e_{a_{2}},[e_{a_{3}},[e_{a_{1}},k_n]]]-[e_{a_{3}},[e_{a_{2}},[e_{a_{1}},k_n]]]
\end{align*}

Clearly, four elements in the last equation are in standard form of length $n+3$.

We now conduct induction to prove $[k_m, k_n]$ can be written as the sum of standard forms of length $m+n$. Assume $m < n$ and $k_m=[e_{a_{1}},k_{m-1}]$. Therefore, we can write
\begin{align*}
[k_m,k_n]
=&[[e_{a_{1}},k_{m-1}],k_n]\\
=&[e_{a_{1}},[k_{m-1},k_n]]+[k_{m-1},[k_n,e_{a_{1}}]\\
=&[k_1,[k_{m-1},k_n]]+[k_{m-1},k_{n+1}]
\end{align*}
By induction, $[k_{m-1},k_n]$ can be written as a sum of elements in standard forms of length $m+n-1$, which implies that $[e_{a_{1}},[k_{m-1},k_n]]$ can be written as a sum of elements in standard forms of length $m+n$. The same argument holds for $[k_{m-1},k_{n+1}]$. Therefore, $[k_m,k_n]$ can be written as a sum of elements in standard forms of length $m+n$.

\end{proof}

Lemma \ref{lem:2.2} proves two elements in standard forms in Lie bracket can be written as the linear sum of standard forms. Then, in the theorem below, we are going to prove a more general result.

\begin{thm} \label{thm:2.3}
For any element $X, Y$ of length $m$ and $n$ respectively $(m, n > 0)$, $[X,Y]$ can be written as the sum of standard forms of length $l=m+n$.
\end{thm}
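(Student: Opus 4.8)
The plan is to prove the statement by strong induction on the total length $l = m + n$, using Lemma \ref{lem:2.2} as the engine that recombines standard forms. Here, by an \emph{element of length $k$} I understand an arbitrary iterated bracket monomial built from $k$ of the generators $e_{a_i}$, that is, a binary bracketing tree with $k$ leaves. The key observation is that Lemma \ref{lem:2.2} already settles the case in which \emph{both} $X$ and $Y$ are themselves in standard form; hence the entire difficulty is to first rewrite each of $X$ and $Y$ as a \emph{linear combination} of standard forms, after which bilinearity of the bracket finishes the job.

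For the base case $l = 2$, i.e. $m = n = 1$, both $X = e_a$ and $Y = e_b$ are already standard and $[e_a, e_b]$ is a standard form of length $2$. For the inductive step, assume the statement holds for all total lengths strictly less than $l$. I would first reduce $X$ to a sum of standard forms: if $m = 1$ then $X$ is already standard; if $m > 1$, write $X = [X_1, X_2]$ as its outermost bracket, where $X_1, X_2$ have lengths summing to $m < l$. Applying the induction hypothesis at total length $m$, the element $X = [X_1, X_2]$ equals a linear combination $\sum_s c_s S_s$ of standard forms $S_s$ of length $m$. The identical argument rewrites $Y = \sum_t d_t T_t$ as a combination of standard forms $T_t$ of length $n$.

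Expanding by bilinearity,
\begin{align*}
[X, Y] = \sum_{s,t} c_s d_t\, [S_s, T_t],
\end{align*}
and each $[S_s, T_t]$ is a bracket of two standard forms of lengths $m$ and $n$, so Lemma \ref{lem:2.2} expresses it as a sum of standard forms of length $m + n = l$. Collecting these shows that $[X, Y]$ is a sum of standard forms of length $l$, completing the induction.

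The step I expect to require the most care is keeping the two uses of the hypothesis distinct: the induction hypothesis is invoked at the smaller total lengths $m$ and $n$ to straighten $X$ and $Y$ individually into combinations of standard forms, whereas Lemma \ref{lem:2.2} is invoked at the full length $l$ to recombine the resulting standard forms. For the recursion to be well-founded one must check that length is additive under bracketing, so that splitting $X = [X_1, X_2]$ strictly decreases the relevant parameter, and that every monomial of length greater than one genuinely decomposes as an outermost bracket of two shorter monomials, which is what legitimizes the decomposition in the first place.
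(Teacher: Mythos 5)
Your proof is correct and follows essentially the same route as the paper: strong induction on the total length $l=m+n$, using the induction hypothesis to straighten $X$ and $Y$ individually into linear combinations of standard forms and then invoking bilinearity together with Lemma \ref{lem:2.2} to recombine. You are in fact somewhat more explicit than the paper about why the induction hypothesis applies to $X$ and $Y$ themselves (via the outermost-bracket decomposition $X=[X_1,X_2]$), which is a welcome clarification rather than a departure.
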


\begin{proof}
We try to prove this inductively on the total length $l$. Note that $n = l-m$ since the total length is $l$. When $l=2$, we have the base case
\begin{align*}
    [X,Y] = [e_{a_{1}}, e_{a_{2}}],
\end{align*}
where $[e_{a_{1}}, e_{a_{2}}]$ is a standard form of length $2$.

Suppose when $l < l'$, $[X,Y]$ can be converted to a linear sum of elements in standard form of length $l$. We want to prove the statement holds when $l=l'$. In this case, $X$ is an element of length $m$ and $Y$ is an element of length $l'-m$. Both have length less than $l'$ which implies they can be converted to a linear sum of standard forms from the induction hypothesis. Because of the bilinear property of lie bracket, without loss of generality, suppose that $X$ and $Y$ are elements in standard forms of length $m$ and $l'-m$ respectively. Based on the results from Lemma \ref{lem:2.2}, then $[X,Y]$ can be written as a linear sum of length $l'$, which proves the theorem.
\end{proof}

With the theorem above, we can write, for example, $[[e_1,e_2],[[e_1,e_3],[e_2,e_3]]]$, which is an element of length $6$, as a linear sum of elements in standard forms of length $6$.
\begin{align*}
    &[[e_1,e_2],[[e_1,e_3],[e_2,e_3]]]\\
   =&[[e_1,e_2],[e_2,[e_3,[e_3,e_1]]]+[[e_1,e_2],[e_3,[e_2,[e_1,e_3]]]]\\
    =&[e_1,[e_2,[e_2,[e_3,[e_3,e_1]]]+[e_2,[e_1,[e_2,[e_3,[e_1,e_3]]]]]\\
    +&[e_1,[e_2,[e_3,[e_2,[e_1,e_3]]]]]+[e_2,[e_1,[e_3,[e_2,[e_3,e_1]]]]].
\end{align*}

As an application of Theorem \ref{thm:2.3}, we have the following corollary.
\begin{cor}\label{cor:2.4}
Let $\lambda=\sum\limits_{i=1}^r n_i \alpha_i$ be a root of $\mathfrak{g}$. The root space $\mathfrak{g}_\lambda$ is spanned by elements in standard form.
\end{cor}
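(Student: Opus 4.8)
The plan is to combine the generation of the positive part of $\mathfrak{g}$ by the Chevalley generators $e_i$ with Theorem \ref{thm:2.3}. First I would recall the structural fact underlying the construction of Kac-Moody algebras (see \cite{Kac}): for a positive root $\lambda = \sum_{i=1}^r n_i \alpha_i$, the subalgebra $\mathfrak{n}_+ = \bigoplus_{\mu > 0} \mathfrak{g}_\mu$ is generated as a Lie algebra by $\{e_i\}_{1 \leq i \leq r}$. Consequently every element of $\mathfrak{n}_+$, and in particular every element of the weight space $\mathfrak{g}_\lambda$, is a linear combination of iterated Lie brackets (Lie monomials) in the generators $e_i$. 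Since such an iterated bracket is a weight vector whose weight is the sum of the simple roots indexed by the generators appearing in it, only those monomials in which $e_i$ occurs exactly $n_i$ times can contribute to $\mathfrak{g}_\lambda$; each such monomial has length $n = \sum_{i=1}^r n_i$.

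Next I would observe that each of these Lie monomials is, by definition, an element of length $n$ in the sense of the preceding results. If the monomial is a single generator $e_i$ (the case $\lambda = \alpha_i$), it is already in standard form of length $1$; otherwise it has the shape $[X,Y]$, where $X$ and $Y$ are themselves iterated brackets in the $e_i$, hence elements of some lengths $m$ and $n-m$ with $0 < m < n$. Applying Theorem \ref{thm:2.3} to $[X,Y]$ rewrites it as a linear combination of standard forms of length $n$. Because the Jacobi-identity manipulations underlying Lemma \ref{lem:2.2} and Theorem \ref{thm:2.3} only rearrange the bracketing and never change the multiset of generators $e_i$ that appear, each standard form so produced carries the same weight $\lambda$ and therefore lies in $\mathfrak{g}_\lambda$. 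Expanding an arbitrary element of $\mathfrak{g}_\lambda$ as a linear combination of such monomials and replacing each monomial by its standard-form expansion then exhibits that element as a linear combination of standard forms, which is exactly the claim.

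The routine bookkeeping — checking that lengths add and that the weight is preserved under the rewriting — is immediate, so the only real input beyond Theorem \ref{thm:2.3} is the generation statement for $\mathfrak{n}_+$. I expect this to be the main (and essentially the only) obstacle: it is not established within the excerpt itself and must be imported from the general theory of Kac-Moody algebras, where it follows from the defining presentation by generators and relations together with the triangular decomposition $\mathfrak{g} = \mathfrak{n}_- \oplus \mathfrak{h} \oplus \mathfrak{n}_+$ and its refinement into root spaces. Once that fact is granted, the corollary is a direct consequence of Theorem \ref{thm:2.3}.
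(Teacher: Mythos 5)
Your argument is correct and is essentially the proof the paper intends: the paper gives no explicit proof, but it invokes exactly the same two ingredients — the fact (noted in \S\ref{sec:2}) that the positive root space $\mathfrak{g}_\lambda$ is generated by the $e_i$, so it is spanned by weight-$\lambda$ Lie monomials, and Theorem \ref{thm:2.3} to rewrite each such monomial as a combination of standard forms of the same weight. Your explicit flagging of the generation of $\mathfrak{n}_+$ as the only external input is a fair and accurate reading of what the paper leaves implicit.
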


\section{A Combinatorial Problem}\label{sec:3}
%This part is for the combinatorial problem, which should be a short section.

In this section, we are introducing a combinatorial problem that is related to our main problem.

\begin{lem}\label{lem:3.1}
Suppose we have $n_1$ non-distinguishable red balls and $n_3$ non-distinguishable blue balls putting into $n_2$ boxes $(n_1, n_2, n_3 \geq 0)$, without exclusion (boxes can be empty). There are 
\begin{align*}
    {n_1+n_2-1 \choose n_2-1}{n_2+n_3-1 \choose n_2-1}
\end{align*}
ways to arrange the balls.
\end{lem}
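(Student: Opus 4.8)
The plan is to reduce the problem to two independent applications of the classical stars-and-bars count. The key observation is that the red balls and the blue balls do not interact: an arrangement is determined exactly by specifying, independently, how many red balls land in each of the $n_2$ boxes and how many blue balls land in each box. Since balls of the same colour are indistinguishable, an arrangement of the red balls is the same datum as a tuple $(r_1,\dots,r_{n_2})$ of nonnegative integers with $r_1+\cdots+r_{n_2}=n_1$, and likewise an arrangement of the blue balls is a tuple $(b_1,\dots,b_{n_2})$ of nonnegative integers with $b_1+\cdots+b_{n_2}=n_3$. By the multiplication principle, the total number of arrangements is the product of the number of red tuples and the number of blue tuples, so it suffices to count each separately.

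Next I would count the number of solutions in nonnegative integers to $r_1+\cdots+r_{n_2}=n_1$ by the stars-and-bars bijection. Represent such a solution by a row consisting of $n_1$ identical stars and $n_2-1$ identical bars; reading left to right, the bars cut the stars into $n_2$ (possibly empty) blocks whose sizes are precisely $r_1,\dots,r_{n_2}$. This sets up a bijection between solutions and such rows, and the number of rows equals the number of ways to choose the positions of the $n_2-1$ bars among the $n_1+n_2-1$ total symbols, namely $\binom{n_1+n_2-1}{n_2-1}$. Running the identical argument with $n_3$ in place of $n_1$ gives $\binom{n_2+n_3-1}{n_2-1}$ for the blue balls. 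Multiplying the two counts yields the claimed formula $\binom{n_1+n_2-1}{n_2-1}\binom{n_2+n_3-1}{n_2-1}$.

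I expect no genuine obstacle here, since the argument is the standard indistinguishable-balls-into-boxes count applied twice. The only point meriting a line of care is the boundary behaviour when one of the $n_i$ vanishes (in particular $n_2=0$, where one must confirm that the binomial-coefficient conventions and the reading of the empty bijection agree with the actual number of arrangements). This is routine once the bijection above is in place, and in any event does not affect the regime $n_1,n_2,n_3\geq 2$ in which the main theorem is later applied.
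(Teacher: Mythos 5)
Your proof is correct and takes essentially the same route as the paper: both apply the stars-and-bars count $\binom{n+l-1}{l-1}$ to the red and blue balls separately and multiply by independence. Your write-up is if anything slightly more complete, since you spell out the bijection that the paper only illustrates through a worked example.
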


\begin{proof}
There are ${n+l-1 \choose l-1}$ ways to put $n$ non-distinguishable balls into $l$ boxes without exclusion. We can prove the lemma based on this theorem above.
\begin{enumerate}
    \item  We first consider putting all red balls into the boxes regardless of the blue balls, which has ${n_1+n_2-1 \choose n_2-1}$ ways.
    \item  Similarly, if we put all blue balls into the boxes, there are ${n_2+n_3-1 \choose n_2-1}$ ways.
\end{enumerate}
 Then we combine the situations together since putting red balls in the boxes and putting blue ones in the boxes are two independent events. Hence, the total number of ways to put the balls are
 \begin{align*}
     {n_1+n_2-1 \choose n_2-1}{n_2+n_3-1 \choose n_2-1}.
 \end{align*}
\end{proof}

\begin{exmp}\label{Exp:3.2}
We let $n=2$ and $l=4$ and see how we get ${n+l-1 \choose l-1}$ as the number of ways to put without exclusion $n$ balls into $l$ boxes. We will explain how to derive the number.

Let us first consider a sequence of numbers with the length of $5$, which corresponds to $n+l-1$, and we have $l-1=3$ boards and $n=2$ balls to be put in all 5 positions. Clearly, we have ${5 \choose 2}=10$ ways in total. Put all the boards and balls into a sequence. Note that $l-1$ boards break the sequence into $l$ parts. For example, $(a,b,b,a,b)$ has four parts: with $a$ in first part and third part (parts starting on the left) and nothing in others. Therefore, each part corresponds to a box and the number of balls in each part is the number of balls we will put into the corresponding box.

This sequence corresponds to a possible case, several of which are listed as follow:
\begin{enumerate}
    \item $(a,a,b,b,b)$
    \item $(a,b,a,b,b)$
    \item $(a,b,b,a,b)$
\end{enumerate}
where $a$ represents the ball, and $b$ represents the board.
We have several rules for these balls and boards.
\begin{enumerate}
    \item Each interval between two $b$ is a box, and the bracket and its closet $b$ form a box.
    \item The number of $a$ inside each box is the number of balls that is put into the box.
    \item The boxes can be empty.
    \item The boxes are distinguishable, and the balls are non-distinguishable.
\end{enumerate}
When we look at our own example, we have $3$ boards, which means we have $4$ boxes, including the boxes bounded by the bracket. We have two balls to be put in four boxes. Thus, we transform the permutation question into what we want to solve. Moreover, they are respectively corresponded to the situations, several of which are listed as follow:
\begin{enumerate}
    \item $2$ $0$ $0$ $0$ corresponds to $(a,a,b,b,b)$
    \item $1$ $1$ $0$ $0$ corresponds to $(a,b,a,b,b)$
    \item $1$ $0$ $1$ $0$ corresponds to $(a,b,b,a,b)$
\end{enumerate}
where the first number represents the number of balls in the first interval, and the second in the second box, and so on.

Thus, the number of ways to put $n$ balls into $l$ boxes is equivalent to that to put $l-1$ boards to separate $n$ numbers, which is exactly ${n+l-1 \choose l-1}$.
\end{exmp}

\section{Dimension of Root Spaces}\label{sec:4}

\subsection{Properties of n-tuples}\label{sec:4.1}
%This part corresponds to Part 1 in Section 3. In Lemma 3.5, add the condition that the number of $e_2$ is larger than $2$, and then after this lemma, discuss the cases for the first two $e_2$.

Let $A$ be a $3 \times 3$ generalized Cartan matrix in the following type
\begin{align*}
\begin{pmatrix}
2 &-a_1 & 0 \\
-a_1 &2 & -a_2\\
0 & -a_2 & 2\\
\end{pmatrix},
\end{align*}
where $a_1$ and $a_2$ are positive integers and at least one of them is larger than one. The corresponding Kac-Moody algebra is denoted by $A$.

For the special matrix $A$, we can write as follow four specific formulas from property $(5)$ in the definition of Kac-Moody algebra (see \S \ref{sec:2}):
\begin{enumerate}
    \item ${\rm ad}(e_1)^{1+ a_1}(e_2)=0$.
    \item ${\rm ad}(e_2)^{1+ a_1}(e_1)=0$.
    \item ${\rm ad}(e_2)^{1+ a_2}(e_3)=0$.
    \item ${\rm ad}(e_3)^{1+ a_2}(e_2)=0$.
    \item $[e_1,e_3]=[e_3,e_1]=0$.
\end{enumerate}
We will use these properties frequently when proving our main result.

Now we fix a root $\lambda = \sum\limits_{i=1}^{3 }n_i\alpha_i$. Denote by $\mathfrak{g}_{\lambda}$ the root space.  Given an element in standard form
\begin{align*}
    [e_{a_{n}},[e_{a_{n-1}},[\dots,[e_{a_{2}},e_{a_{1}}]\dots]]] \in \mathfrak{g}_{\lambda},
\end{align*}
it can be associated to an $n$-tuple $(a_n,a_{n-1}, \dots,a_1)$ naturally, where
\begin{itemize}
\item $1 \leq a_i \leq 3$ for $1 \leq i \leq n$;
\item for each positive integer $j$, $|\{a_i | a_i = j , 1 \leq i \leq n\}|=n_j$.
\end{itemize}
Conversely, for each $n$-tuple $(a_n,a_{n-1},\dots,a_1)$, the corresponding element in standard form is
\begin{align*}
    [e_{a_{n}},[e_{a_{n-1}},[\dots,[e_{a_{2}},e_{a_{1}}]\dots]]].
\end{align*}
Therefore, any element in standard form corresponds to an $n$-tuple as the discussion above, and we will use the $n$-tuples to represent the elements in standard form. If we want to find the dimension of $\mathfrak{g}_{\lambda}$, we have to find a maximal linearly independent set in $\mathfrak{g}_{\lambda}$. By Theorem \ref{thm:2.3}, we can assume that the elements in this set are in standard form. In this case, the $n$-tuples is more convenient to work with. However, if we work on $n$-tuples, there are some problems, when we do the calculations:
\begin{itemize}
    \item Several distinct $n$-tuples may correspond to linearly dependent elements in $\mathfrak{g}_{\lambda}$.
    \item A non-trivial $n$-tuple may correspond to a trivial element in $\mathfrak{g}_{\lambda}$.
\end{itemize}
In this section, we calculate the dimension of root spaces by solving the problems above.

\begin{defn}\label{def:4.1}
An $n$-tuple is \emph{nontrivial}, if the corresponding element in standard form is nontrivial. A collection of $n$-tuples are \emph{linearly independent} if their corresponding standard forms are linearly independent.
\end{defn}

\begin{lem}\label{lem:4.2}
Let $(a_n, \dots, a_1)$ be a nontrivial $n$-tuple. Then, it is linearly equivalent to an $n$-tuple $(b_n,\dots,b_1)$ such that $b_1=2$.
\end{lem}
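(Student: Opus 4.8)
The plan is to reduce the entire problem to an analysis of the innermost bracket $[e_{a_2},e_{a_1}]$ of the standard form
\[
[e_{a_n},[e_{a_{n-1}},[\dots,[e_{a_2},e_{a_1}]\dots]]],
\]
and to show that the nontriviality hypothesis already constrains $a_1$ and $a_2$ severely. The first step I would record is the elementary observation that if $[e_{a_2},e_{a_1}]=0$, then the entire standard form vanishes, since the Lie bracket of any element with $0$ is $0$; propagating this outward through the nested brackets gives the zero element. Taking the contrapositive, a nontrivial $n$-tuple must satisfy $[e_{a_2},e_{a_1}]\neq 0$.

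Next I would exploit this nonvanishing together with the defining relations. Because each $a_i\in\{1,2,3\}$, antisymmetry of the bracket gives $[e_{a_2},e_{a_1}]=0$ whenever $a_1=a_2$ (as $[x,x]=0$), and relation $(5)$, namely $[e_1,e_3]=[e_3,e_1]=0$, gives $[e_{a_2},e_{a_1}]=0$ whenever $\{a_1,a_2\}=\{1,3\}$. Since $[e_1,e_2]$, $[e_2,e_1]$, $[e_2,e_3]$, $[e_3,e_2]$ are the genuine (nonzero) root vectors, ruling out the two vanishing cases above leaves exactly the possibilities in which one of $a_1,a_2$ equals $2$ and the other lies in $\{1,3\}$.

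The case split is then immediate. If $a_1=2$, the tuple already has the required form and I take $(b_n,\dots,b_1)=(a_n,\dots,a_1)$. If instead $a_2=2$ and $a_1\in\{1,3\}$, I rewrite the innermost bracket by antisymmetry as $[e_2,e_{a_1}]=-[e_{a_1},e_2]$ and pull the sign through the brackets, obtaining
\[
[e_{a_n},[\dots,[e_{a_3},[e_2,e_{a_1}]]\dots]]=-[e_{a_n},[\dots,[e_{a_3},[e_{a_1},e_2]]\dots]].
\]
The right-hand side is the standard form attached to the tuple $(a_n,\dots,a_3,a_1,2)$, whose final entry is $2$; since the two standard forms differ only by the scalar $-1$, they are linearly equivalent in the sense of Definition~\ref{def:4.1}.

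I do not expect a deep obstacle: the argument is a finite case check followed by a single use of antisymmetry. The one point demanding care is the opening step, where I must justify rigorously that a nontrivial tuple cannot have a vanishing innermost bracket, and confirm that self-bracketing and relation $(5)$ are the \emph{only} ways $[e_{a_2},e_{a_1}]$ can vanish; this is precisely what guarantees that the surviving configurations are exactly those in which some index equals $2$, so that after at most one swap we reach $b_1=2$.
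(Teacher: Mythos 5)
Your proof is correct and takes essentially the same approach as the paper: use the relation $[e_1,e_3]=0$ (together with $[x,x]=0$) to force one of $a_1,a_2$ to equal $2$, then, if necessary, apply antisymmetry to the innermost bracket to move the $2$ into the last position at the cost of a sign. The paper's own proof is a terser version of exactly this argument, so no comparison is needed.
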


\begin{proof}
By property $(5)$, we know that $[e_1,e_3] = 0$. Therefore, one of $a_1$ and $a_2$ should be $``2"$ to make the element non-trivial. Now we assume that $a_2=2$. Since 
\begin{align*}
    [e_2,e_{a_1}]=-[e_{a_1},e_2], 
\end{align*}
the $n$-tuples $(a_n,\dots,a_2,a_1)$ and $(a_n,\dots,a_1,a_2)$ are linearly equivalent. This finishes the proof of this lemma.
\end{proof}

By this lemma, we can always assume that the first element in any nontrivial $n$-tuple is $``2"$.

\begin{lem}\label{lem:4.3}
Let $n \geq 2$ be a positive integer. For any nontrivial $n$-tuple
\begin{align*}
    (a_{n},\dots,p,q,\dots,a_{2},a_{1})
\end{align*} 
such that $a_{i+1}=p$, $a_{i}=q$ and $p,q = 1$ or $3$, this element is linear equivalent to the element 
\begin{align*}
    (a_{n},\dots,q,p,\dots, a_{2},a_{1})
\end{align*}
in standard form of the same length, where we swap the position of $p$ and $q$.
\end{lem}

\begin{proof}
We will first prove elements in the following cases
\begin{align*}
    (p,q,a_{i-1},\dots,a_{2},a_{1}),
\end{align*}
where $a_{i+1}=p$ and $a_{i}=q$. It is easy to check that the argument holds when $i=2,3$. We now focus on elements in standard form of length greater than $4$. We consider the $[e_p,[e_q,A]]$, where $A=[e_{a_{i-1}},[\dots. ,  [e_{a_{2}},e_{a_{1}}]\dots]$ 
is an element in standard form of length $i-1$.

Based on the Jacobi Identity,
\begin{align*}
[e_p,[e_q,A]]
=&-[e_q,[A,e_p]]-[A,[e_p,e_q]]
\end{align*}
As we know $[e_1,e_3]=0$ given by property $(5)$, and $[e_i,e_i]=0$ ($i = 1,3$),  $[e_p,[e_q,A]]
=[e_q,[e_p,A]]$.

The same proof holds for the general case $(a_n,\dots,p,q,\dots,a_1)$.
\end{proof}

%The lemma above is essential to solve the dimension of $\mathfrak{g}_\lambda$ because it shows that $e_1$ and $e_3$ can swap their positions. Thus, suppose $e_1$ exists $m_1$ times and $e_3$ exists $m_3$ times between two adjacent $e_2$. Then if the pair $(m_1, m_3)$ are the same for two elements, the two elements will also be the same.

\begin{exmp}\label{exp:4.4}
Take $[e_2,[e_1,[e_1,[e_3,e_2]]]$ as an example. Let us find the number of elements that are equal to $[e_2,[e_1,[e_1,[e_3,e_2]]]]$.
By applying Jacobi Identity, we can get
\begin{align*}
&[e_2,[e_1,[e_1,[e_3,e_2]]]]\\
=&[e_2,[e_1,[e_3,[e_1,e_2]]]]\\
=&[e_2,[e_3,[e_1,[e_1,e_2]]]]
\end{align*}
\end{exmp}

\begin{lem}\label{lem:4.5}
Let $(a_n,\dots,a_i,2,\dots,a_1)$ be a nontrivial $n$-tuple such that 
\begin{itemize}
    \item $a_i$ is either $``1"$ or $``3"$;
    \item there are at least three $``2"$ on the right hand side of $a_i$.
\end{itemize}
Then, $(a_n,\dots,a_i,2,\dots,a_1)$ is linearly independent to $(a_n,\dots,2,a_i,\dots,a_1)$.
\end{lem}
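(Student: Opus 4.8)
The lemma says that swapping an $e_i$ (with $i\in\{1,3\}$) past an adjacent $e_2$ produces a genuinely new element—the two $n$-tuples are linearly independent—provided there are at least three $2$'s sitting to the right of the position $a_i$. This is the counterpoint to Lemma~\ref{lem:4.3}, where swapping $e_1$ past $e_3$ merely negated the element. The hypothesis of "at least three $2$'s to the right" is the crucial technical condition, and understanding why three (rather than one or two) is needed will be the heart of the argument.

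Let me think about what I'd actually try.

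**The plan.** The natural approach is to show that the difference of the two standard forms is nonzero, and the cleanest way to detect nonzero-ness in a Kac-Moody algebra is to pair against something. First I would compute the difference using the Jacobi identity, exactly as in the proof of Lemma~\ref{lem:4.3}. Writing $g = [e_i,[e_2,A]]$ and $g' = [e_2,[e_i,A]]$ where $A$ is the standard form built from $(a_{i-1},\dots,a_1)$, the Jacobi identity gives
\begin{align*}
g - g' = [e_i,[e_2,A]] - [e_2,[e_i,A]] = [[e_i,e_2],A].
\end{align*}
So the two $n$-tuples are linearly independent if and only if $[[e_i,e_2],A] \neq 0$, where $[e_i,e_2]$ is a root vector for the root $\alpha_i+\alpha_2$. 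Since $[e_i,e_2]\neq 0$ (as $a_i\neq 1$ when $i=2$, and indeed $[e_1,e_2],[e_3,e_2]$ are nonzero by the defining relations when $a_1,a_2\geq 1$), the whole question reduces to showing this bracket against $A$ survives. The strategy is then to produce a functional or a representation on which this element acts nontrivially.

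**Detecting the nonvanishing.** The cleanest tool is the bilinear pairing: I would apply $\mathrm{ad}(f_2)$ repeatedly, or pair with the lowering operators $f_i, f_2$, and track the coefficient of a surviving term. Concretely, to test whether $g-g'\neq 0$ I would act by a suitable product of $\mathrm{ad}(f_2)$'s and $\mathrm{ad}(f_i)$'s chosen to "kill off" all of $A$ and land back in $\mathfrak{h}$ (or in a low-height root space where computation is transparent), and show the resulting scalar is nonzero. This is where the "three $2$'s on the right" hypothesis enters: with at least three copies of $e_2$ available to the right of $a_i$, I have enough $e_2$'s so that the chain $\mathrm{ad}(e_2)^k(e_i)$ does not prematurely vanish—the relation $\mathrm{ad}(e_i)^{1+a_k}(e_2)=0$ and its companions impose vanishing only once a chain gets too long, and with three $2$'s buffering the $e_i$ the configuration stays inside the range where the Serre relations do not force collapse. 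I expect the argument to compute a commutator expansion whose leading coefficient is an explicit nonzero integer (a product of the structure constants coming from $\mathrm{ad}(e_2)^j(e_i)$), thereby certifying $g-g'\neq 0$.

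**The main obstacle.** The hard part will be justifying that the specific surviving term I isolate really does survive—that is, controlling all the other terms produced by the Jacobi/Serre expansion so that they cannot cancel the term I am counting. In particular I will need to verify that the three-$2$'s condition is not merely sufficient book-keeping but genuinely prevents the Serre relation $\mathrm{ad}(e_2)^{1+a_j}(e_i)=0$ from annihilating the relevant sub-bracket; with fewer than three $2$'s one can arrange $A$ so that $[[e_i,e_2],A]$ collapses to zero, which is exactly why the hypothesis is stated as it is. I would treat this by an induction on the structure of $A$ (peeling off the rightmost simple root generators one at a time, as in Lemma~\ref{lem:2.2}), maintaining as an invariant that the coefficient of a chosen monomial stays nonzero, and invoking the combinatorial count of Lemma~\ref{lem:3.1} to confirm that enough independent configurations remain. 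Establishing this invariant rigorously, rather than the formal Jacobi computation, is the delicate step.
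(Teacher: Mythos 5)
Your opening move is exactly the paper's: reduce to $g=[e_{a_i},[e_2,A]]$ versus $g'=[e_2,[e_{a_i},A]]$ and apply the Jacobi identity to get $g-g'=[[e_{a_i},e_2],A]$, so that everything hinges on whether this single bracket vanishes. Up to that point you and the paper agree. The problem is what comes next: you never actually establish the non-vanishing. Everything after ``Detecting the nonvanishing'' is a description of a strategy (pair against products of $\mathrm{ad}(f_2)$ and $\mathrm{ad}(f_i)$, isolate a surviving monomial, run an induction on the structure of $A$ while controlling cancellation), and you yourself flag the decisive step --- showing that the isolated term cannot be cancelled and that the Serre relations do not force collapse --- as ``the delicate step'' still to be established. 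A plan with its central claim deferred is a gap, not a proof. The paper closes this gap much more cheaply (if tersely): it observes that $[A,[e_{a_i},e_2]]$ can vanish only in two degenerate configurations, namely $e_{a_i}=e_2$ or $A=[e_{a_i},e_2]$ (the situation of Remark \ref{rmk:4.6}), and that the hypotheses of the lemma --- $a_i\in\{1,3\}$ and at least three ``2''s to the right --- exclude both. No lowering operators or invariant-tracking induction are invoked at all.

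Two smaller points. First, your assertion that the tuples are linearly independent \emph{if and only if} $[[e_{a_i},e_2],A]\neq 0$ is too strong: $g-g'\neq 0$ only rules out $g=g'$, not $g'=cg$ for some scalar $c\neq 1$. (The paper makes the same leap, so this is not a point of divergence, but you should not present it as an equivalence.) Second, your speculation that the three-``2'' hypothesis is what keeps chains $\mathrm{ad}(e_2)^k(e_i)$ below the Serre threshold is not how the hypothesis is actually used; its role is simply to exclude the short configurations in which the bracket $[A,[e_{a_i},e_2]]$ degenerates, such as $A=[e_{a_i},e_2]$.
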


\begin{proof}
Similar to proof of Lemma \ref{lem:4.3}, we reduce the $n$-tuple to the following one $(a_i,2,\dots,a_1)$ and we will prove that $(a_i,2,\dots,a_1)$ is linearly independent to $(2,a_i,\dots,a_1)$.

Let $A=[e_{a_{i-2}},[\dots,  [e_{a_{2}},e_{a_{1}}]\dots]]$. Based on Jacobi identity, we have
\begin{align*}
    [e_{a_i},[e_2,A]] = [e_2,[e_{a_i},A]-[A,[e_{a_i},e_2]]
\end{align*}
From the equation above, we just need to check whether $[A,[e_{a_i},e_2]]$ is zero. Clearly, it is zero only under some specific situations listed as follows: 
\begin{itemize}
    \item $e_{a_i}=e_2$
    \item $A=[e_{a_i},e_2]$
\end{itemize}
Since the situations described above are not possible under the assumptions given by the lemma, the two elements, $[e_{a_i},[e_2,A]]$ and $[e_2,[e_{a_i},A]]$, are linearly independent. So are the two $n$-tuples $(\dots,a_i,2,\dots)$ and $(\dots,2,a_i,\dots)$.
\end{proof}

\begin{rmk}\label{rmk:4.6}
In this remark, we want to clarify the exceptions mentioned in Lemma \ref{lem:4.3}. When $e_{a_i}=e_2$, it is obvious that $(\dots,a_i,2,\dots)=(\dots,2,a_i,\dots)$. When $A=[e_{a_i},e_2]$, we have the following example
\begin{align*}
    &[e_2,[e_{a_i},[e_{a_i},e_2]]]\\
    =&[e_{a_i},[e_2,[e_{a_i},e_2]]]+[[e_{a_i},e_2],[e_2,e_{a_i}]]\\
    =&[e_{a_i},[e_2,[e_{a_i},e_2]]]
\end{align*}
In this situation, we find that $e_{a_i}$ and $e_2$ are able to exchange their positions.
\end{rmk}

\begin{rem}\label{rmk:4.7}
Given an $n$-tuple $(a_n,\dots,a_1)$, we highlight the elements $``2"$ in this $n$-tuple as follows
\begin{align*}
    (\dots,2,\dots,2,\dots,2,\dots,2),
\end{align*}
where $``\dots"$ represents sequences of $``1"$ and $``3"$. Lemma \ref{lem:4.5} tells us that the role of $``2"$ is like a board, and the elements $``1"$ and $``3"$ cannot go across the board. If we switch $``1"$ (or $``3"$) with $``2"$ (under the conditions in Lemma \ref{lem:4.5}), then we will get a new linearly independent element. In the meantime, by Lemma \ref{lem:4.3}, switching the positions of two adjacent $``1"$ and $``3"$ does not give us new linearly independent tuples. More precisely, if $e_i$ and $e_j$ ($i,j=1,3$) are between two ``adjacent" $e_2$, then exchanging the positions of $e_i$ and $e_j$ does not give us new element in standard form. Here, two ``adjacent" $e_2$ means that there is no $e_2$ between them. This observation is very important for the calculation of the dimension of root spaces.
\end{rem}

As we discussed above, to calculate the dimension of the root space $\mathfrak{g}_{\lambda}$, it is equivalent to find the largest set of linearly independent elements in standard form. Compared with the combinatorial problem we discussed in \S \ref{sec:3}, we can take $e_1$ and $e_3$ as the red and blue balls respectively, and take $e_2$ as the board. Since $\lambda=n_1 \alpha_1+n_2 \alpha_2 +n_3 \alpha_3$, it is equivalent to consider the combinatorial problem that there are $n_1$ red balls, $n_3$ blue balls and $n_2-1$ boards under some other conditions. The other nontrivial conditions actually come from assumptions we made in Lemma \ref{lem:4.5}.

By Lemma \ref{lem:4.2}, we can assume that the first element in an $n$-tuple is $``2"$, and by Remark \ref{rmk:4.7}, we know that the elements $``2"$ break the $n$-tuple into intervals. The \emph{$i$-th interval} in a given $n$-tuple is the one between the $i$-th $``2"$ and the $(i+1)$-th $``2"$. In Lemma \ref{lem:4.5}, we made the assumption that there are at least two $``2"$ on the right hand side of $a_i$ in the following $n$-tuple
\begin{align*}
(a_n, \dots,2,a_i,\dots,a_1).
\end{align*}
Therefore, elements in the first interval cannot swap with the second $``2"$ in the $n$-tuple. In the next lemma, we make a careful discussion of all possible cases of the numbers of $``1"$ and $``3"$ in the first interval.

\begin{lem}\label{lem:4.8}
Denote by $n'_1$ and $n'_3$ the numbers of $``1"$ and $``3"$ in the first interval of a given nontrivial $n$-tuple. Then, we have $n'_1 \leq a_1$ and $n'_3 \leq a_2$.
\end{lem}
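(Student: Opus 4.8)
The plan is to isolate the sub-expression $B$ built from the innermost $e_2$ together with the letters of the first interval, and to read the two desired bounds directly off the Serre relations (1)--(5): the point is that $B$ vanishes as soon as $n'_1 > a_1$ or $n'_3 > a_2$, while nontriviality of the whole $n$-tuple forces $B \neq 0$.

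First I would fix notation. By Lemma \ref{lem:4.2} I may assume the innermost letter of the standard form is $e_2$. Reading from this innermost $e_2$ outward, let $e_{c_1}, \dots, e_{c_m}$ (with each $c_j \in \{1,3\}$ and $m = n'_1 + n'_3$) be the letters of the first interval, so that the second $``2"$ sits immediately outside $e_{c_m}$. Set
\[
B := [e_{c_m},[\dots,[e_{c_1},e_2]\dots]] = {\rm ad}(e_{c_m})\cdots {\rm ad}(e_{c_1})(e_2),
\]
which is a product of exactly $n'_1$ operators ${\rm ad}(e_1)$ and $n'_3$ operators ${\rm ad}(e_3)$ applied to $e_2$. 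The full standard form is obtained by bracketing $B$ successively with $e_2$ and then with the remaining letters, so $B$ occurs as the innermost factor; in particular, if $B = 0$ then the whole element is $0$. Hence it suffices to prove that $B \neq 0$ forces $n'_1 \le a_1$ and $n'_3 \le a_2$.

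Next I would use property (5): from $[e_1,e_3] = 0$ one gets $[{\rm ad}(e_1),{\rm ad}(e_3)] = {\rm ad}([e_1,e_3]) = 0$, so the operators ${\rm ad}(e_1)$ and ${\rm ad}(e_3)$ commute (this is the operator form of Lemma \ref{lem:4.3}). I may therefore reorder the factors of $B$ at will and write
\[
B = {\rm ad}(e_1)^{n'_1}\,{\rm ad}(e_3)^{n'_3}(e_2) = {\rm ad}(e_3)^{n'_3}\,{\rm ad}(e_1)^{n'_1}(e_2).
\]
The bounds now drop out of the remaining Serre relations. Property (1) gives ${\rm ad}(e_1)^{1+a_1}(e_2) = 0$, so if $n'_1 \geq 1 + a_1$ the second expression for $B$ vanishes; property (4) gives ${\rm ad}(e_3)^{1+a_2}(e_2) = 0$, so if $n'_3 \geq 1 + a_2$ the first expression for $B$ vanishes. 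Either way $B = 0$, contradicting nontriviality, and hence $n'_1 \le a_1$ and $n'_3 \le a_2$.

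The one step to treat with care is the reordering: I must make sure that pushing all the ${\rm ad}(e_1)$'s past all the ${\rm ad}(e_3)$'s is legitimate for an arbitrary interleaving of the $c_j$, and that the innermost $e_2$ is never involved in a swap. This is exactly what the commutation $[{\rm ad}(e_1),{\rm ad}(e_3)] = 0$ supplies (equivalently, repeated application of Lemma \ref{lem:4.3} to adjacent letters of the interval), since $e_2$ is only ever acted upon, never moved. The degenerate case in which the tuple contains a single $``2"$ is handled identically, with $B$ essentially the whole element. Beyond this, the argument is a direct reading of the four Serre relations, so I anticipate no further difficulty.
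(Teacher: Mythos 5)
Your proposal is correct and takes essentially the same approach as the paper: reduce to the innermost bracket built from the first interval and kill it with the Serre relations ${\rm ad}(e_1)^{1+a_1}(e_2)=0$ and ${\rm ad}(e_3)^{1+a_2}(e_2)=0$. Your write-up is in fact more complete than the paper's two-line argument, since you explicitly justify the case of an interval mixing $``1"$s and $``3"$s via the commutation $[{\rm ad}(e_1),{\rm ad}(e_3)]={\rm ad}([e_1,e_3])=0$, whereas the paper only exhibits the pure tuples $(1,\dots,1,2)$ and $(3,\dots,3,2)$ and leaves the reordering implicit.
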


\begin{proof}
Based on the property $(5)$, if $n'_1 \leq a_1$, $n'_3 \leq a_2$, we know the elements corresponding to these tuples $(\underbrace{1,1,\dots,1}_{n'_1},2)$ and $(\underbrace{3,3,\dots,3}_{n'_3},2)$ are trivial elements, where $n'_1 \geq 1+a_1$ and $n'_3 \geq 1+a_2$ respectively. To avoid the trivial cases, we should have $n_1 \leq a_1$ and $n_3 \leq a_2$.
\end{proof}

\subsection{General Case}\label{sec:4.2}
We first give a general calculation for the case that the number of $``1"$ is $i$ and the number of $``3"$ is $j$ in the first interval based on Lemma \ref{lem:4.2} and \ref{lem:4.5}. Then, we sum over all possible cases based on Lemma \ref{lem:4.8}. Note that the elements in standard form corresponding to those $n$-tuples span the root space $\mathfrak{g}_{\lambda}$. Thus, we give an upper bound of the dimension of $\mathfrak{g}_{\lambda}$. 

By Lemma \ref{lem:4.8}, we assume that the number of $``1"$  is $i$ and the number of $``3"$ is $j$ in the first interval, where $i \leq a_1$ and $j \leq a_2$. Then, the rest number of $``1"$ is $n_1-i$, and the rest number of $``3"$ is $n_3-j$. All of these numbers will be arranged into $n_2-2$ intervals. By Lemma \ref{lem:3.1}, we have
\begin{align*}
    {n_1+n_2-i-2 \choose n_2-2}{n_2+n_3-j-2 \choose n_2-2}
\end{align*}
ways in this situation. By summing over all possible situations in the first interval, we have
\begin{align*}
    \sum_{0\leq i \leq a_1, 0\leq j \leq a_2 \atop (i,j)\neq (0,0)}{n_1+n_2-i-2 \choose n_2-2}{n_2+n_3-j-2 \choose n_2-2}
\end{align*}
ways in total.

\subsection{Special Cases}\label{sec:4.3}
In \S \ref{sec:4.2}, we give an upper bound of the dimension of $\mathfrak{g}_{\lambda}$ by summing over all possible $n$-tuples as discussed in \S \ref{sec:4.1}. However, we have counted some trivial $n$-tuples and some $n$-tuples are linearly dependent. In this subsection, we make a careful discussion of these two cases.

\subsubsection{Trivial Tuples}\label{sec:4.3.1} 
In this special case, the first interval only has one element, and then there are at least $a_i-1$ empty interval subsequent to the first one, where $a_i$ depends on the number $``i"$ in the first interval. More precisely, the sequences are given as follows,
\begin{align*}
    (\dots,\underbrace{2,\dots,2}_{a_{1}},1,2) \text{ or } (\dots,\underbrace{2,\dots,2}_{a_2},3,2),
\end{align*} 
where the number of $``2"$ are at least $a_1$ and $a_2$ respectively. Recall the property $(5)$: ad$(e_2)^{1+a_1}(e_1)$ = ad$(e_2)^{1+a_2}(e_3)$ = $0$. Thus, the two sequences we wrote above, namely $(\dots,\underbrace{2,\dots,2}_{a_{1}},1,2)$ and $(\dots,\underbrace{2,\dots,2}_{a_{2}},3,2)$, are all trivial because the corresponding elements equal to zero.

\begin{lem}\label{lem:4.9}
Suppose that $n_2 \geq \max\left \{1+a_1,1+a_2  \right \}$. There are 
\begin{align*}
    {n_1+n_2+n_3-a_1-2 \choose n_2-a_1-1}+{n_1+n_2+n_3-a_2-2 \choose n_2-a_2-1}
\end{align*}
$n$-tuples counted in \S \ref{sec:4.2}, but contributing to trivial cases. All of these $n$-tuples are in the following form 
\begin{align*}
   (\dots,\underbrace{2,\dots,2}_{a_{1}},1,2), \quad (\dots,\underbrace{2,\dots,2}_{a_{2}},3,2).
\end{align*}
\end{lem}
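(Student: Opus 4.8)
The plan is to prove two assertions at once: that an $n$-tuple counted in §\ref{sec:4.2} gives the zero element exactly when it has one of the two displayed shapes, and that the number of tuples of each shape is the corresponding binomial coefficient. I would handle the $``1"$-shape and the $``3"$-shape separately, read off $C_1$ and $C_2$, and add them after noting that the two families are disjoint: a tuple of the first shape has first interval equal to the single letter $``1"$ and one of the second shape has first interval equal to the single letter $``3"$, so no tuple lies in both.

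For the characterisation of the trivial tuples I would argue from property $(5)$. A tuple counted in §\ref{sec:4.2} is already in the reduced form of Lemma \ref{lem:4.2} (innermost entry $``2"$) and, by Lemma \ref{lem:4.8}, its first interval contains at most $a_1$ ones and at most $a_2$ threes. Among the Serre relations, ${\rm ad}(e_1)^{1+a_1}(e_2)=0$ and ${\rm ad}(e_3)^{1+a_2}(e_2)=0$ can only be triggered by a block of $1+a_1$ ones (resp. $1+a_2$ threes) sitting immediately above the innermost $``2"$, which is a first interval violating Lemma \ref{lem:4.8} and hence is not counted; in any interior interval the same block applies ${\rm ad}(e_1)^{1+a_1}$ to a root vector other than $e_2$, whose $\alpha_1$-string is strictly longer, so no vanishing occurs. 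The relation $[e_1,e_3]=0$ has already been absorbed by the reductions of Lemmas \ref{lem:4.2} and \ref{lem:4.3}. This leaves only ${\rm ad}(e_2)^{1+a_1}(e_1)=0$ and ${\rm ad}(e_2)^{1+a_2}(e_3)=0$: rewriting the innermost bracket $[e_1,e_2]=-[e_2,e_1]$ as in Lemma \ref{lem:4.2}, the first of these vanishes precisely when a single $``1"$ is followed (going outward) by at least $a_1$ copies of $``2"$ before any other generator, i.e. when the first interval is the single letter $``1"$ and the next $a_1-1$ intervals are empty. These are exactly the displayed shapes.

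To count the first family I fix the innermost block $(\underbrace{2,\dots,2}_{a_1},1,2)$, which uses one $``1"$ and $a_1+1$ copies of $``2"$ and leaves a tail of length $n_1+n_2+n_3-a_1-2$ containing $n_1-1$ ones, $n_3$ threes, and $n_2-a_1-1$ twos. Following the boards-and-balls model of Example \ref{Exp:3.2}, the twos serve as boards and the letters $``1",``3"$ as balls, so the number of admissible tails equals the number of ways to insert $n_2-a_1-1$ boards into a sequence of length $n_1+n_2+n_3-a_1-2$, namely
\[
\binom{n_1+n_2+n_3-a_1-2}{n_2-a_1-1}=C_1.
\]
Replacing $a_1$ by $a_2$ and the leading $``1"$ by a leading $``3"$ yields $C_2$, and adding gives the claimed total. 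The hypothesis $n_2\ge\max\{1+a_1,1+a_2\}$ is what makes each innermost block fit, so that both tails exist and both binomial coefficients are well defined (their lower indices are non-negative).

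The step I expect to be the main obstacle is the interior non-vanishing used in the characterisation: showing that a block of $1+a_1$ ones (or $1+a_2$ threes) occurring away from the innermost position never forces the standard form to vanish. This amounts to controlling the lengths of the $\alpha_1$- and $\alpha_3$-root strings through arbitrary weights appearing in $\mathfrak{g}_\lambda$, which needs the $\mathfrak{sl}_2$-representation theory attached to the simple roots rather than the Serre relations alone. A secondary point to check is that the boards-and-balls count of Example \ref{Exp:3.2}, which does not distinguish $``1"$ from $``3"$ in the tail, is compatible with the colour-distinguishing enumeration behind the bound $A$, so that the difference $A-C$ removes each trivial tuple exactly once.
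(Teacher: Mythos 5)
Your counting argument is essentially the paper's own proof: fix the forced prefix $(\underbrace{2,\dots,2}_{a_i},\cdot,2)$, observe that it consumes $a_i+1$ copies of $``2"$ and one ball, and apply a single-colour stars-and-bars count to the remaining $n_1-1+n_3$ letters with the remaining $n_2-a_i-1$ twos as boards, which gives exactly $C_1$ and $C_2$. The characterisation of which tuples are trivial, together with the two obstacles you honestly flag (the interior non-vanishing, which needs root-string/$\mathfrak{sl}_2$ information rather than the Serre relations alone, and the compatibility of this colour-blind count with the colour-distinguishing count behind $A$), is simply asserted rather than proved in the paper, so your proposal reproduces the published argument and goes somewhat beyond it in identifying what a complete proof would still require.
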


\begin{proof}
Since the number of $``2"$ in the front should be least $a_i$, there will have $a_i$ intervals reduced from the right hand side, where $i$ depends on the number in the first interval. Then the total number of the rest of intervals is $n_2-a_i$. Eliminating the first interval, the number of $e_3$ is $n_1-1+n_3$.

By Lemma \ref{lem:3.1}, there will be
\begin{align*}
&{(n_1-1+n_3)+(n_2-a_1-1) \choose n_2-a_1-1}+ {(n_3-1+n_1)+(n_2-a_2-1) \choose n_2-a_2-1} \\
= & {n_1+n_2+n_3-a_1-2 \choose n_2-a_1-1}+{n_1+n_2+n_3-a_2-2 \choose n_2-a_2-1}
\end{align*}
possible cases in the first special case.
\end{proof}

\subsubsection{Linearly Dependent Tuples}\label{sec:4.3.2}
In Remark \ref{rmk:4.6}, we give an example to explain why we have the extra conditions in Lemma \ref{lem:4.5}. This example tells us that the $n$-tuples we counted in \S \ref{sec:4.1} are not linearly independent. In this subsection, we will find the linearly dependent ones based on Remark \ref{rmk:4.6}.

As shown in Remark \ref{rmk:4.6}, based on the Jacobi Identity, we have
\begin{align*}
    [e_2,[e_{a_{i}},[e_{a_{i}},e_2]]]=[e_{a_{i}},[e_2,[e_{a_{i}},e_2]]].
\end{align*}
Therefore, in the situation above, $e_1$ or $e_3$ can exchange their position with $e_2$, leading to the repetition of many different permutations of elements. For example, $(1,2,3,3,2)$ and $(1,3,2,3,2)$ are linearly equivalent. It is easy to check that the $n$-tuples $(\dots,2,i,i,2)$ and $(\dots,i,2,i,2)$ are linearly equivalent, where $i=1 \text{ or } 3$. However, we count both of them in \S \ref{sec:4.2}. We will calculate the total number of the $n$-tuples in the form of $(\dots,2,i,i,2)$, and subtract from the total number.

\begin{lem} \label{lem:4.10}
Suppose that $n_1,n_2,n_3 \geq 2$ and $a_i > 1$, there are 
\begin{align*}
    {n_1+n_2-4 \choose n_2-2} {n_2+n_3-2 \choose n_2-2}+{n_2+n_3-4 \choose n_2-2} {n_1+n_2-2 \choose n_2-2}
\end{align*}
situations with the form $(\dots,2,i,i,2)$, where $i$ is either $1$ or $3$.

\end{lem}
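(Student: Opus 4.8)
The plan is to realize the count as two applications of Lemma~\ref{lem:3.1}, after pinning down exactly which $n$-tuples have the shape $(\dots,2,i,i,2)$. By Lemma~\ref{lem:4.2} I may assume every tuple under consideration ends in a $``2"$, so that the rightmost entry is $a_1=2$ and the \emph{first interval} is the block of $``1"$'s and $``3"$'s sitting between this $``2"$ and the next one. Reading the displayed shape $(\dots,2,i,i,2)$ from the right forces $a_1=2$, $a_2=a_3=i$ and $a_4=2$; that is, the first interval consists of \emph{exactly} two copies of $i$ and no copy of the other letter, with the second $``2"$ immediately to its left. This is precisely the configuration singled out in Remark~\ref{rmk:4.6} (the case $A=[e_{a_i},e_2]$), for which $(\dots,2,i,i,2)$ and $(\dots,i,2,i,2)$ are linearly equivalent although both were counted separately in \S\ref{sec:4.2}. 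Counting the tuples of the first shape therefore measures exactly the over-count I must remove.

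Next I would split according to $i=1$ and $i=3$ and count each family with Lemma~\ref{lem:3.1}. Fixing $a_1=2$, the two copies of $i$ in the first interval, and $a_4=2$ consumes two of the $n_2$ letters $``2"$ and, in the case $i=1$, two of the $n_1$ letters $``1"$ (respectively two of the $n_3$ letters $``3"$ when $i=3$). What remains is to distribute the leftover balls among the $n_2-1$ intervals determined by the remaining $``2"$'s, exactly as in \S\ref{sec:4.2}: by Lemma~\ref{lem:3.1} (with box parameter $n_2-1$) the case $i=1$ contributes
\begin{align*}
{n_1+n_2-4 \choose n_2-2}{n_2+n_3-2 \choose n_2-2},
\end{align*}
coming from $n_1-2$ red balls and $n_3$ blue balls, while the case $i=3$ contributes the mirror expression
\begin{align*}
{n_2+n_3-4 \choose n_2-2}{n_1+n_2-2 \choose n_2-2}
\end{align*}
from $n_3-2$ blue balls and $n_1$ red balls. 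Summing the two cases yields the claimed total.

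The hypotheses enter exactly where the count could degenerate. The assumptions $n_1,n_3\ge 2$ guarantee that a first interval of the form $(i,i)$ can actually be formed, and $n_2\ge 2$ guarantees that the two $``2"$'s bracketing it exist, so that the $n_2-1$ residual boxes of Lemma~\ref{lem:3.1} are meaningful. The hypothesis $a_i>1$ is what ensures that these tuples are genuinely \emph{nontrivial}, hence really were included in the upper bound of \S\ref{sec:4.2}: if the relevant Cartan entry equalled $1$, then Lemma~\ref{lem:4.8} (which rests on the relations $\mathrm{ad}(e_1)^{1+a_1}(e_2)=0$ and $\mathrm{ad}(e_3)^{1+a_2}(e_2)=0$) would already force a first interval with two equal letters to vanish, so there would be nothing to subtract.

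The step I expect to be the main obstacle is the bookkeeping that makes ``subtract the number of $(\dots,2,i,i,2)$ tuples'' the correct correction rather than an over- or under-correction. Concretely, I must check that the assignment $(\dots,2,i,i,2)\mapsto(\dots,i,2,i,2)$ is a genuine bijection onto a set of equally-counted, nontrivial tuples, that each linear-dependence class contributes exactly one redundant member in the \S\ref{sec:4.2} tally, and that none of the $(\dots,2,i,i,2)$ tuples removed here coincide with the trivial tuples already discarded in \S\ref{sec:4.3.1}. Only once this disjointness and bijectivity are confirmed does the bare binomial count translate into the correct contribution $B$ to $\dim\mathfrak{g}_\lambda$.
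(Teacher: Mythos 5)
Your proposal is correct and follows essentially the same route as the paper: identify the over-counted tuples as exactly those whose first interval is $(i,i)$ with the adjacent bracketing $``2"$'s fixed, and count them for $i=1$ and $i=3$ separately by distributing the remaining $n_1-2$ (resp.\ $n_3-2$) balls of one colour and all balls of the other colour into the $n_2-1$ remaining regions via Lemma~\ref{lem:3.1}. The bookkeeping issues you flag at the end (bijectivity of $(\dots,2,i,i,2)\mapsto(\dots,i,2,i,2)$, one redundancy per dependence class, and disjointness from the trivial tuples of \S\ref{sec:4.3.1}) are likewise left implicit in the paper's own proof, which is if anything terser than your write-up.
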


\begin{proof}
In the proof, we will only discuss the $e_1$ case, as the $e_3$ case shares the same reasoning. Note that there is always an $n$-tuple of the form $(...,2,1,2)$ where there is at least one ``$1$" in the second interval such that it corresponds to a linearly dependent $n$-tuple of the form $(...,2,1,1,2)$. For example, $(3,1,2,1,1,2)$ and $(3,1,1,2,1,2)$ are linearly dependent. Therefore, we can easily conclude that each $n$-tuple of the form $(...,2,1,1,2)$ can be linearly dependent to an $n$-tuple of the form $(...,2,1,2)$, which tells us that the total number of reduced elements is that of elements whose corresponding $n$-tuple is of the form $(...,2,1,1,2)$. Since the answer to $e_3$ case can derived in the same way, the total number to be reduced from second special case is
\begin{align*}
    {n_1+n_2-4 \choose n_2-2} {n_2+n_3-2 \choose n_2-2}+{n_2+n_3-4 \choose n_2-2} {n_1+n_2-2 \choose n_2-2}.
\end{align*}
\end{proof}

\subsection{Main Result}\label{sec:4.4}
We define the following numbers:
\begin{align*}
    &A=\sum_{0\leq i \leq a_1, 0\leq j \leq a_2 \atop (i,j)\neq (0,0)}{n_1+n_2-i-2 \choose n_2-2}{n_2+n_3-j-2 \choose n_2-2}\\
    &B={n_2+n_3-4 \choose n_2-2} {n_1+n_2-2 \choose n_2-2}+{n_1+n_2-4 \choose n_2-2} {n_2+n_3-2 \choose n_2-1}\\
    &C_1={n_1+n_2+n_3-a_1-2 \choose n_2-a_1-1},\quad C_2={n_1+n_2+n_3-a_2-2 \choose n_2-a_2-1}
\end{align*}

With the discussion in \S \ref{sec:4.1}, \ref{sec:4.2} and \ref{sec:4.3}, we have our main theorem.

\begin{thm}\label{thm:4.11}
Suppose that $a_1 \leq a_2$ and $n_1,n_2,n_3 \geq 2$. Let $\lambda = \sum_{i=1}^{3}n_i \alpha_i$. Denote by $\mathfrak{g}_{\lambda}$ the corresponding root space. We have
\begin{align*}
\dim \mathfrak{g}_{\lambda} = \left\{\begin{matrix}
A-B, &n_2 < \min\left \{1+a_1,1+a_2  \right \} \\ 
A-B-C_1, & 1+a_1 \leq n_2 < 1+a_2\\ 
A-B-C_1-C_2, & n_2 \geq \max\left \{1+a_1,1+a_2  \right \}
\end{matrix}\right.
\end{align*}
\end{thm}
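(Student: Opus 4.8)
The plan is to assemble the counting results of the preceding lemmas into a single dimension formula, separating the three regimes according to the size of $n_2$ relative to $1+a_1$ and $1+a_2$. By Corollary \ref{cor:2.4} the root space $\mathfrak{g}_\lambda$ is spanned by standard forms, and by Lemma \ref{lem:4.2} I may restrict attention to $n$-tuples whose rightmost entry is $``2"$. Under the ball-and-box dictionary of Remark \ref{rmk:4.7} (with $e_1,e_3$ as red and blue balls and each $e_2$ as a board), Lemmas \ref{lem:4.3} and \ref{lem:4.5} record exactly which rearrangements preserve the spanned line and which yield genuinely new independent elements. The starting point is therefore the quantity $A$ computed in \S\ref{sec:4.2}: using the first-interval constraint of Lemma \ref{lem:4.8} together with the placement count of Lemma \ref{lem:3.1}, this gives an upper bound for $\dim \mathfrak{g}_\lambda$.

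From this upper bound I would subtract, in turn, the two families isolated in \S\ref{sec:4.3}. First, Lemma \ref{lem:4.10} shows that the tuples of shape $(\dots,2,i,i,2)$ are linearly dependent on configurations already counted, contributing the correction $B$. Second, Lemma \ref{lem:4.9} shows that the tuples of shape $(\dots,\underbrace{2,\dots,2}_{a_1},1,2)$ and $(\dots,\underbrace{2,\dots,2}_{a_2},3,2)$ map to the zero element through the Serre relations of property $(5)$, contributing $C_1$ and $C_2$. After deleting the $B$ dependent tuples and the trivial tuples, the surviving standard forms are spanning, pairwise inequivalent, and nonzero, hence a basis, and the dimension equals the net count.

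The case distinction then comes solely from whether the trivial families are actually present. The count $C_1$ requires at least $a_1$ leading $``2"$s to the right of the single ball, which is possible only when $n_2 \geq 1+a_1$; equivalently, the lower index $n_2-a_1-1$ of its binomial coefficient is negative otherwise and the term vanishes. The same reasoning with $n_2-a_2-1$ governs $C_2$. Invoking $a_1 \leq a_2$, this produces precisely the three stated regimes: for $n_2 < 1+a_1$ neither trivial family occurs, giving $A-B$; for $1+a_1 \leq n_2 < 1+a_2$ only the $C_1$ family occurs, giving $A-B-C_1$; and for $n_2 \geq 1+a_2$ both occur, giving $A-B-C_1-C_2$.

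The main obstacle I anticipate is not the bookkeeping of the regimes but the justification that the surviving tuples are genuinely linearly independent and that the subtracted families are not double-counted. Concretely, one must verify that the $B$ configurations removed in \S\ref{sec:4.3.2} are disjoint from the $C$ configurations removed in \S\ref{sec:4.3.1}: their rightmost patterns $(\dots,i,i,2)$ and $(\dots,2,1,2)$ differ, since in a trivial tuple the entry immediately left of the lone ball is a $``2"$ rather than a second ball, but this must be confirmed uniformly across all interval lengths. More seriously, one must argue that the relations of Lemmas \ref{lem:4.3}, \ref{lem:4.5}, \ref{lem:4.9}, and \ref{lem:4.10} exhaust \emph{all} linear relations among the remaining standard forms; only this completeness upgrades the net count from an upper bound to the exact dimension, and it is the crux of the argument.
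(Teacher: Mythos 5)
Your proposal follows essentially the same route as the paper: the paper's own proof of Theorem \ref{thm:4.11} is a four-sentence summary that cites \S\ref{sec:4.2} for $A$, \S\ref{sec:4.3.2} for $B$, and \S\ref{sec:4.3.1} for $C_1,C_2$, with the case split governed (as you say) by when the lower indices $n_2-a_1-1$ and $n_2-a_2-1$ are nonnegative. If anything, your final paragraph --- flagging the need to check that the $B$ and $C$ families are disjoint and that the relations of Lemmas \ref{lem:4.3}, \ref{lem:4.5}, \ref{lem:4.9}, \ref{lem:4.10} exhaust all linear relations --- is more explicit about the remaining burden of proof than the paper itself, which leaves those verifications implicit in the preceding sections.
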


\begin{proof}
The number $A$ is the total number in the general case as we discussed in \S \ref{sec:4.2}. The number $B$ corresponds to the special case we discussed in \S \ref{sec:4.3.2}. The number $C_1$ and $C_2$ comes from \S \ref{sec:4.3.1}. The discussion in the above sections give us the theorem.
\end{proof}

\begin{rmk}\label{rmk:4.11.1}
In this remark, we consider the general case that $n_i \geq 0$. Note that if $n_i=0$, then we reduce to a Kac-Moody algebra $\mathfrak{g}'$, of which the associated generalized Cartan matrix is a $2 \times 2$ matrix. Therefore, we assume that $n_i \geq 1$ for $i=1,2,3$.  

Let $n_2 = 1$. by Lemma \ref{lem:4.2}, we know that we can place the only $e_2$ at the beginning of the $n$-tuple. Then, by Lemma \ref{lem:4.5}, we know that if the root space $\mathfrak{g}_{\lambda}$ is nonempty, then $\dim \mathfrak{g_\lambda}=1$.

Now we consider $n_2 \neq 1$ and $n_1=1$. The case of $n_3=1$ can be discussed similarly. Let $\mathfrak{g}':=\mathfrak{g}(A')$ be the Kac-Moody algebra associated to the matrix \begin{align*}
A'=\begin{pmatrix}
2 & -a_2\\
-a_2 & 2
\end{pmatrix},
\end{align*}
and $\mathfrak{g}'$ is a hyperbolic Kac-Moody algebra. Let  $\lambda'=n_2 \alpha_2 + n_3 \alpha_3$. If we forget the only $e_1$, then the dimension of $\mathfrak{g}_{\lambda}$ can be calculated from the dimension of $\mathfrak{g}'_{\lambda'}$ (see \cite{KLL}). Therefore, the only thing we have to consider is the position of the  $``1"$ in the $n$-tuple. With a similar discussion as we did in Lemma \ref{lem:4.8}, we can get a similar formula for $\dim \mathfrak{g}_{\lambda}$ in this case.
\end{rmk}

\subsection{A Special Case of Hyperbolic Kac-Moody Algebras}\label{sec:4.5}
People have made a full classification of hyperbolic Kac-Moody algebras. Some rank-$3$ Hyperbolic Kac-Moody algebras are defined by the generalized Cartan matrix we consider in this paper (see \S \ref{sec:4.1}). In this subsection, we will calculate the root multiplicities of a hyperbolic Kac-Moody algebra as an application of Theorem \ref{thm:4.11}.

Let $a_1=1$ and $a_2=2$. The corresponding Kac-Moody algebra is a hyperbolic Kac-Moody algebra (see \cite{CarChu,Sac}). With the same discussion as in \S \ref{sec:4.2}, the total number of cases in the general case is 
\begin{align*}
     A:=\sum_{0\leq i \leq 1, 0\leq j \leq 2 \atop (i,j)\neq (0,0)}{n_1+n_2-i-2 \choose n_2-2}{n_2+n_3-j-2 \choose n_2-2}.
\end{align*}

For the special case of trivial tuples, we have
\begin{align*}
C_1+C_2:= {n_1+n_2+n_3-4 \choose n_2-3}+{n_1+n_2+n_3-5 \choose n_2-4}
\end{align*}
many situations, in which $n_2 \geq \max\left \{1+a_1,1+a_2  \right \}$ and $n_1, n_3 \geq 1$ based on Lemma \ref{lem:4.10}. 

For the special case about linearly dependent tuples, since $a_1=1$, we will only consider $e_3$ in this example. Therefore, the total number of elements for the special case is
\begin{align*}
B:={n_2+n_3-4 \choose n_2-2} {n_1+n_2-2 \choose n_2-2},
\end{align*}
where $n_1,n_2,n_3 \geq 2$.

\begin{cor}\label{cor:4.13}
Suppose that $n_1,n_2,n_3 \geq 2$. Let $\lambda = \sum_{i=1}^{3}n_i \alpha_i$. Denote by $\mathfrak{g}_{\lambda}$ the corresponding root space. We have
\begin{align*}
\dim \mathfrak{g}_{\lambda} = \left\{\begin{matrix}
A-B, &n_2 < 2 \\ 
A-B-C_1, & n_2 =3\\ 
A-B-C_1-C_2, & n_2 \geq 3
\end{matrix}\right..
\end{align*}
\end{cor}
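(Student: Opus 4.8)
The plan is to derive Corollary~\ref{cor:4.13} as the direct specialization of Theorem~\ref{thm:4.11} to the generalized Cartan matrix with $a_1 = 1$ and $a_2 = 2$. First I would record that this choice of $(a_1,a_2)$ produces one of the rank-$3$ hyperbolic Kac-Moody algebras on the classification list, citing \cite{CarChu,Sac}, so that the corollary genuinely computes root multiplicities of a hyperbolic algebra rather than an arbitrary indefinite one. Since $a_1 = 1 \leq 2 = a_2$ and we assume $n_1,n_2,n_3 \geq 2$, the hypotheses of Theorem~\ref{thm:4.11} are met, and it remains to substitute $a_1 = 1$, $a_2 = 2$ into the four quantities $A$, $B$, $C_1$, $C_2$ and into the threshold conditions governing the three regimes.

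The substitutions into $A$, $C_1$, and $C_2$ are mechanical: $A$ becomes the stated double sum over $0 \leq i \leq 1$, $0 \leq j \leq 2$, $(i,j)\neq(0,0)$, while $C_1$ and $C_2$ are read off from Lemma~\ref{lem:4.9} by setting $a_1 = 1$ and $a_2 = 2$ respectively. The two thresholds become $1 + a_1 = 2$ and $1 + a_2 = 3$, so the trivial-tuple correction $C_1$ is subtracted once $n_2 \geq 2$ and the correction $C_2$ is additionally subtracted once $n_2 \geq 3$; this is exactly the three-way case division $A - B$, $A - B - C_1$, $A - B - C_1 - C_2$ claimed in the corollary.

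The one step that requires genuine care, rather than blind substitution, is the linearly-dependent count $B$. In Theorem~\ref{thm:4.11} this quantity is a sum of two symmetric terms, one arising from configurations $(\dots,2,1,1,2)$ built from repeated $e_1$'s and one from $(\dots,2,3,3,2)$ built from repeated $e_3$'s. However, Lemma~\ref{lem:4.10} carries the hypothesis $a_i > 1$, which is precisely what makes the exchange move of Remark~\ref{rmk:4.6} produce a genuine linear dependence. Since $a_1 = 1$, the Serre relation ${\rm ad}(e_1)^{2}(e_2) = 0$ already rules out the relevant $(\dots,2,1,1,2)$ patterns, so the $e_1$-term of $B$ must be dropped and only the $e_3$-term ${n_2+n_3-4 \choose n_2-2}{n_1+n_2-2 \choose n_2-2}$ survives, giving the $B$ recorded in the corollary.

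The main obstacle is therefore this asymmetry: I must verify carefully, by rerunning the argument of \S\ref{sec:4.3.2} with $e_3$ alone, that setting $a_1 = 1$ removes exactly one of the two contributions to $B$ and leaves the other untouched. Once this is confirmed, the three cases of the corollary follow immediately by plugging $a_1 = 1$ and $a_2 = 2$ into Theorem~\ref{thm:4.11}, which completes the proof.
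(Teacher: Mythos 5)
Your proposal is correct and follows essentially the same route as the paper: the text of \S\ref{sec:4.5} likewise obtains the corollary by specializing Theorem~\ref{thm:4.11} to $a_1=1$, $a_2=2$, and likewise observes that because $a_1=1$ the $e_1$-contribution to $B$ disappears (the patterns $(\dots,2,1,1,2)$ are already trivial by the Serre relation, so only the $e_3$-term of Lemma~\ref{lem:4.10} survives). Your identification of the thresholds as $1+a_1=2$ and $1+a_2=3$ is also the intended reading of the case division.
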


\section*{Acknowledgements}
We are thankful to great opportunity provided by Pioneer China Program (PCP).

\bigskip
\noindent\small{\textsc{International Department, The Affiliated High School of South China Normal University}\\
No.1 Zhongshan Ave. West, Tianhe District, Guangzhou, Guangdong, China}\\
\emph{E-mail address}:  \texttt{chenbw.bieber2018@gdhfi.com}

\bigskip
\noindent\small{\textsc{International Department, The Affiliated High School of South China Normal University}\\
No.1 Zhongshan Ave. West, Tianhe District, Guangzhou, Guangdong, China}\\
\emph{E-mail address}:  \texttt{luohy.laurie2017@gdhfi.com}

\bigskip
\noindent\small{\textsc{Department of Mathematics, South China University of Technology}\\
381 Wushan Rd, Tianhe Qu, Guangzhou, Guangdong, China}\\
\emph{E-mail address}:  \texttt{hsun71275@scut.edu.cn}

\end{document}